\newtheorem{theorem}{Theorem}
\newtheorem{lemma}{Lemma}
\theoremstyle{remark}
\begin{document}

\markboth{Ritabrata Munshi}{The circle method and bounds for $L$-functions - I}
\title[The circle method and bounds for $L$-functions - I]{The circle method and bounds for $L$-functions - I}

\author{Ritabrata Munshi}   
\address{School of Mathematics, Tata Institute of Fundamental Research, 1 Homi Bhabha Road, Colaba, Mumbai 400005, India.}     
\email{rmunshi@math.tifr.res.in}

\begin{abstract}
Let $f$ be a Hecke-Maass or holomorphic primitive cusp form of arbitrary level and nebentypus, and let $\chi$ be a primitive character of conductor $M$. For the twisted $L$-function $L(s,f\otimes \chi)$ we establish the hybrid subconvex bound
$$
L\left(\frac{1}{2}+it,f\otimes\chi\right)\ll (M(3+|t|))^{\frac{1}{2}-\frac{1}{18}+\varepsilon},
$$ 
for $t\in \mathbb R$. The implied constant depends only on the form $f$ and $\varepsilon$.
\end{abstract}

\subjclass{11F66, 11M41}
\keywords{$L$-functions, subconvexity, twists, circle method}

\maketitle


\section{Introduction}
\label{intro}

\subsection{Statement of result}
In this paper we prove the following hybrid subconvex bound:
\begin{theorem}
Let $f$ be a Hecke-Maass or holomorphic primitive cusp form for $\Gamma_0(P)\subset SL(2,\mathbb Z)$ with nebentypus $\psi$. Let $\chi$ be a primitive Dirichlet character of modulus $M$. Then we have
$$
L\left(\tfrac{1}{2}+it,f\otimes\chi\right)\ll_{f,\varepsilon} (MT)^{\frac{1}{2}-\frac{1}{18}+\varepsilon},
$$
where $T=3+|t|$.
\end{theorem}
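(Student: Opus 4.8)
The plan is to begin with an approximate functional equation, which expresses $L(\tfrac12+it,f\otimes\chi)$, up to a dual term handled identically and a negligible error, as a smoothly weighted Dirichlet series of length comparable to the square root of the analytic conductor. Since the conductor of $f\otimes\chi$ at $\tfrac12+it$ is $\asymp (MT)^2$, after a dyadic decomposition it suffices to bound
$$
S(N)=\sum_{n}\lambda_f(n)\,\chi(n)\,n^{-it}\,V\!\left(\frac{n}{N}\right)
$$
for $N\ll (MT)^{1+\varepsilon}$, where $V$ is a fixed smooth bump. The trivial estimate, using $\sum_{n\le N}|\lambda_f(n)|\ll N^{1+\varepsilon}$, gives $S(N)\ll N^{1+\varepsilon}$, which merely reproduces the convexity bound; the entire task is to save a factor $(MT)^{1/18}$ over this at the critical length $N\asymp MT$.

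To create cancellation I would separate the two sources of oscillation — the Hecke eigenvalues $\lambda_f(n)$ on one side and the arithmetic–archimedean factor $\chi(n)n^{-it}$ on the other — by inserting an auxiliary variable. Writing
$$
S(N)=\sum_{n,m}\lambda_f(m)\,\chi(n)\,n^{-it}\,W\!\left(\frac{m}{N}\right)V\!\left(\frac{n}{N}\right)\delta(n-m),
$$
with $W$ a bump adapted to the support, I would expand $\delta(n-m)$ by the Duke–Friedlander–Iwaniec circle method,
$$
\delta(n-m)=\frac{1}{Q}\sum_{q\le Q}\frac{1}{q}\sum_{\substack{a\bmod q\\(a,q)=1}} e\!\left(\frac{(n-m)a}{q}\right)\int_{\mathbb R} g(q,v)\, e\!\left(\frac{(n-m)v}{qQ}\right)dv,
$$
leaving $Q$ as a free parameter to be optimized. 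The decisive feature — the conductor-lowering mechanism — is that the $n$-variable will be resolved by Poisson summation modulo $qM$, so the character modulus $M$ enters the modulus of the dual sum and supplies extra cancellation; this is what makes it possible to extract a saving rather than merely recover convexity.

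I would then treat the two inner sums by their natural summation formulae. For the $m$-sum $\sum_m\lambda_f(m)\,e(-am/q)\,g(\cdots)$ the $\mathrm{GL}(2)$ Voronoi formula produces a dual sum over $m'$ of length $\asymp q^2/N$, which for $q\le Q\asymp\sqrt N$ collapses to $O(1)$ terms and effectively resolves the $m$-sum; for the $n$-sum $\sum_n\chi(n)\,n^{-it}\,e(an/q)\,e(\cdots)$, Poisson summation modulo $qM$, combined with a stationary-phase treatment of the $n^{-it}$ factor, produces a dual sum over $n'$ centred near $n'\asymp q$. The residual sum over $a$ then collapses into a complete character sum modulo $qM$ — a Kloosterman- or Gauss-type sum whose square-root size I would evaluate explicitly — so that $S(N)$ becomes a multiple sum over $q$, $m'$ and $n'$ weighted by $\lambda_f(m')$, an arithmetic factor, and an archimedean integral governed by stationary phase.

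The main obstacle, and where the genuine work lies, is the final estimation of this dualized expression. I would apply Cauchy–Schwarz to eliminate the Hecke eigenvalues, bounding their second moment trivially via the Rankin–Selberg estimate $\sum_{m\le x}|\lambda_f(m)|^2\ll x^{1+\varepsilon}$, and so reduce matters to a correlation sum in the modulus and dual variables. Opening the square and applying Poisson summation once more isolates a diagonal contribution — which furnishes the main term of the bound — against an off-diagonal governed by a congruence together with a stationary-phase condition; controlling the off-diagonal demands genuine square-root cancellation in the attendant character sums and a careful matching of the arithmetic and archimedean oscillations. Balancing the diagonal term, which grows with $Q$, against the dual-length and Cauchy–Schwarz losses, which decrease with $Q$, fixes the optimal $Q$ and yields the exponent $\tfrac12-\tfrac1{18}$. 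I expect the stationary-phase bookkeeping in this last step, and the verification that the character sums do not conspire to inflate the off-diagonal, to be the most delicate part of the argument.
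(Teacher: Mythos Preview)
Your outline reproduces the opening moves correctly --- approximate functional equation, separation via $\delta(n-m)$, Voronoi on the $\lambda_f$-sum, Poisson on the $\chi(n)n^{-it}$-sum --- and indeed after these steps one is left with essentially
\[
\sum_{q\le Q}\;\sum_{|n|\ll Q^{2}/N}\;\sum_{|m|\ll\sqrt{MT}}\lambda_f(n)\,\bar\chi(m)\chi(q)\,m^{it}q^{-it}\,e_q(-M\bar m n),
\]
whose trivial estimation recovers convexity. The gap is in the endgame. You propose to Cauchy out the Hecke eigenvalues and then Poisson once more, optimising over the single parameter $Q$. But with $Q\asymp\sqrt N$ the dual Hecke variable $n$ has length $O(1)$, so Cauchy in $n$ saves nothing in the diagonal; and if instead you keep the whole $q$-sum inside the square and Poisson in $m$, the relevant modulus is $qq'\asymp Q^2\asymp MT$, which matches the square of the $m$-length $\sqrt{MT}$, so the off-diagonal does not shrink. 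With a single scale $Q$ this route stalls; the paper notes that one can then only reach $\tfrac12-\tfrac1{118}$ by invoking the Duke--Friedlander--Iwaniec bilinear bound for Kloosterman fractions, not $\tfrac12-\tfrac1{18}$.

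The missing idea is to build a \emph{bilinear structure into the set of moduli itself}: take $q=q_1q_2$ with $q_i$ primes in separate dyadic ranges $Q_1,Q_2$ (so $Q_1Q_2=Q$), apply Cauchy with only the $q_1$-sum inside the square (summing over $q_2,n,m$ outside), and then Poisson in $m$. Now the diagonal saves $Q_1$ while the off-diagonal modulus is $q_1q_1'q_2\asymp Q_1^2Q_2=Q_1Q\ll MT/Q_2$, which is genuinely smaller than $MT$ once $Q_2$ has some size; balancing the two and the circle-method error yields $\eta=\tfrac1{18}$. Crucially, this requires the freedom to \emph{choose} the moduli, which the DFI $\delta$-method does not give; the paper uses Jutila's version of the circle method precisely for this reason, at the cost of taking $Q$ slightly larger than $\sqrt N$ to control Jutila's error term. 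Your proposal omits both the factorisation $q=q_1q_2$ and the switch to Jutila's method, and without them the claimed optimisation cannot reach the stated exponent.
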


Our result beats the convexity bound $L\left(\tfrac{1}{2}+it,f\otimes\chi\right)\ll_{f,\varepsilon} (MT)^{\frac{1}{2}+\varepsilon}$, simultaneously in the $t$ and $M$ aspect. This is one of the most extensively studied subconvexity problems in the literature. In the $t$-aspect, subconvexity was accomplished by Good \cite{G} (in the holomorphic case of full level) and Meurman \cite{Me-2} (in the case of Maass forms of full level). For number fields, $t$-aspect subconvexity was achieved by Cogdell, Piatetski-Shapiro and Sarnak \cite{CPS}, Petridis and Sarnak \cite{PS} and Diaconu and Garrett \cite{DG}. The first subconvex bound in the $M$ aspect (for $t=0$) was obtained by Duke, Friedlander and Iwaniec \cite{DFI-1} (for holomorphic forms of full level), Bykovskii \cite{By} (for general holomorphic forms) and later by Harcos \cite{H}, Michel \cite{Mi} and Blomer, Harcos and Michel \cite{BHM} (for Maass forms). Over number fields this was established by Venkatesh \cite{V} and Blomer and Harcos \cite{BH-2}. \\

The first hybrid subconvex bound was given by Blomer and Harcos \cite{BH}. They obtained 
$$
L\left(\tfrac{1}{2}+it,f\otimes\chi\right)\ll (MT)^{\frac{1}{2}-\frac{1}{40}+\varepsilon},
$$
for $f$ Maass or holomorphic form of general level. They however only tackle the case of trivial nebentypus. In \cite{M} we established the weaker exponent $\frac{1}{2}-\frac{1}{118}$, for holomorphic $f$ of general level and general nebentypus. For number fields, Michel and Venkatesh \cite{MV} have established such a hybrid subconvex bound with an unspecified exponent.\\

At present we have the Weyl-type bound in the $t$-aspect, and the Burgess-type bound in the $M$-aspect \cite{BH-2} over general number fields, under the Ramanujan conjecture. (This is expected to be the natural boundary of the current methods.) To complete the story one seeks to prove a Burgess-type hybrid bound over general number fields (assuming Ramanujan conjecture). But this has not been achieved yet, not even over $\mathbb Q$. In the context of subconvexity, the only Burgess-type hybrid bound known is due to Heath-Brown \cite{HB} in the case of Dirichlet $L$-function $L(s,\chi)$.

\subsection{Sketch of proof}
We will briefly describe our method, which we believe to be new. The experts will have no problem in filling the gaps. We start with the approximate functional equation which reduces the problem to getting cancellation in the sum 
$$
\sum_{n\sim N}\lambda_f(n)\chi(n)n^{-it},
$$
for $N\ll (MT)^{1+\varepsilon}$, where $\lambda_f(n)$ are the normalized Fourier coefficients of $f$. Let us consider the worst case scenario namely $N=MT$. We write this sum as
$$
\mathop{\sum\sum}_{n,m\sim N}\lambda_f(n)\chi(m)m^{-it}\delta_{n,m}
$$
where $\delta_{n,m}$ is the Kronecker symbol. Then we apply additive harmonics - the circle method, to detect the equation $n=m$ (see Section \ref{setup}). Suppose we use the $\delta$-method of Duke, Friedlander and Iwaniec. Roughly speaking this will yield
$$
\frac{1}{Q^2}\sum_{q\leq Q}\;\sideset{}{^\star}\sum_{a\bmod{q}}\mathop{\sum\sum}_{n,m\sim N}\lambda_f(n)\chi(m)m^{-it}e_q(an-am)
$$
where $e_q(x)=e^{2\pi ix/q}$ and $Q=\sqrt{N}=\sqrt{MT}$.\\ 

Next we apply the Poisson summation on the sum over $m$ and the Voronoi summation on the sum over $n$ (see Section \ref{gl1_twists}). This will reduce the above sum roughly to (see Lemma \ref{sum-form})
$$
\sum_{q\leq \sqrt{MT}}\mathop{\sum\sum}_{\substack{|n|\ll 1\\|m|\ll \sqrt{MT}}}\lambda_f(n)\bar\chi(m)\chi(q)m^{it}q^{-it}e_q(-M\bar m n).
$$
(Here one needs to use stationary phase to deal with some exponential integrals.) Trivial estimation of this sum yields the convexity bound. Any further saving will give us subconvexity. However it is not useful to execute the sum over $q$, as the modulus of the sum (after reciprocity) is of size $(MT)^{3/2}$, whereas the length is just $\sqrt{MT}$. As in \cite{M} we can now use the bound for bilinear forms with Kloosterman fractions, and get a subconvexity result with exponent $\frac{1}{2}-\frac{1}{118}$. However we will avoid this result, and as an alternative use a very simple idea to get a better exponent.\\

Suppose the collection of moduli $q$ we are using in the circle method, has a multiplicative structure - namely each $q$ factorizes uniquely as $q=q_1q_2$, with $q_1\leq Q_1$ and $q_2\leq Q_2$ (with $Q_1Q_2=\sqrt{MT}$). Then applying Cauchy to the above sum one arrives at
$$
\sqrt{Q_2}(MT)^{\frac{1}{4}}\left[\sum_{q_2\leq Q_2}\mathop{\sum\sum}_{\substack{|n|\ll 1\\|m|\ll \sqrt{MT}}}\left|\sum_{q_1\leq Q_1}\chi(q_1)q_1^{-it}e_{q_1q_2}(-M\bar m n)\right|^2\right]^{\frac{1}{2}}.
$$ 
Again for subconvexity we just need some cancellation in the remaining sum. We see that we need $Q_1$ to have some size, because that is exactly the amount we save in the diagonal. For the off-diagonal we will again apply Poisson on the sum over $m$. We save in the off-diagonal as long as the modulus which is of the size $Q_1^2Q_2=(MT)/Q_2$ is smaller than the square of the length of the $m$-sum, which is $\sqrt{MT}$. So the off-diagonal will be satisfactory if $Q_2$ has some size. (We explain this in Section \ref{conclusion}.)\\

Of course to get an inbuilt bilinear structure in the circle method itself, we need to use a more flexible version of the circle method - the one investigated by Jutila. This version comes with an error term which is satisfactory, as we shall find out, as long as we allow the moduli to be slightly larger than $\sqrt{MT}$ (see Section~\ref{setup}). For another application of this idea see \cite{M2}. Further applications of this idea in the context of subconvexity will be given in a follow up paper.


\section{Preliminaries}
\label{prelim}

\subsection{Preliminaries on Maass forms}
For the sake of exposition we shall only present the case of Maass forms of weight $0$, level $P$ and nebentypus $\psi$. The case of holomorphic forms is just similar (or even simpler). Let $f:\mathbb H\rightarrow \mathbb C$ be a Hecke-Maass cusp form with Laplace eigenvalue $\frac{1}{4}+\mu^2\geq 0$, and with Fourier expansion
$$
\sqrt{y}\sum_{n\neq 0}\lambda_f(n)K_{i\mu}(2\pi|n|y)e(nx).
$$     
Let $\chi$ be a primitive Dirichlet character of modulus $M$. For simplicity we will assume that $(P,M)=~1$. The twisted $L$-series $L(s,f\otimes\chi)$, which in the right half plane $\sigma>1$ is defined by the absolutely convergent Dirichlet series $L(s,f\otimes\chi)=\sum_{n=1}^\infty \lambda_f(n)\chi(n)n^{-s}$, extends to an entire function and satisfies the functional equation $\Lambda(s,f\otimes \chi)=\varepsilon(f\otimes\chi)\Lambda(1-s,f\otimes\chi)$. The completed $L$-function is given by 
$$
\Lambda(s,f\otimes\chi)=\left(\frac{\sqrt{P}M}{\pi}\right)^s\Gamma\left(\frac{s+\delta+i\mu}{2}\right)\Gamma\left(\frac{s+\delta-i\mu}{2}\right)L(s,f\otimes\chi)
$$ 
where $\delta=0, 1$ depending on the parity of $f\otimes\chi$. The root number satisfies $|\varepsilon(f\otimes\chi)|=1$.\\

The functional equation, together with the Stirling approximation and Phragmen-Lindel\"of principle, implies the convexity bound $L\left(\tfrac{1}{2}+it,f\otimes\chi\right)\ll_{f,\varepsilon} (MT)^{\frac{1}{2}+\varepsilon}$. Alternatively the functional equation yields an expression for the $L$-values $L(\frac{1}{2}+it,f\otimes\chi)$ as a rapidly convergent series, called the approximate functional equation. Taking a dyadic subdivision of the approximate functional equation, we get the bound 
\begin{align}
\label{afe}
L\left(\tfrac{1}{2}+it,f\otimes\chi\right)\ll_{f,A} \sum_{N \;\text{dyadic}}\frac{1}{\sqrt{N}}\left|\sum_{n\in\mathbb Z}\lambda_f(n)\chi(n)n^{-it}h\left(\frac{n}{N}\right)\right|\left(1+\frac{N}{MT}\right)^{-A},
\end{align}
where $h$ is a smooth function supported in $[1,2]$, $A>0$, $t\in\mathbb R$ and $T=(3+|t|)$. Estimating the inner sums  using Cauchy and the Rankin-Selberg bound $\sum_{1\leq n\leq x}|\lambda_f(n)|^2\ll_{f,\varepsilon}x^{1+\varepsilon}$, one recovers the convexity bound $L\left(\tfrac{1}{2}+it,f\otimes\chi\right)\ll_{f,\varepsilon} (MT)^{\frac{1}{2}+\varepsilon}$. 

\subsection{Voronoi summation formula}
\label{vsf}
We will use the following Voronoi type summation formula. This was first established by Meurman \cite{Me} in the case of full level.   
\begin{lemma}
Let $f$ be as above, let $v$ be compactly supported smooth function on $(0,\infty)$, and suppose $P|q$ and $(a,q)=1$. We have
\begin{align}
\label{voronoi2}
\sum_{n=1}^\infty \lambda_f(n)e_q\left(an\right)v(n)=\frac{\bar\psi(a)}{q}\sum_{\pm}\sum_{n=1}^\infty \lambda_f(\mp n)e_q\left(\pm\bar{a}n\right)V^{\pm}\left(\frac{n}{q^2}\right)
\end{align}
where $\bar{a}$ is the multiplicative inverse of $a\bmod{q}$, and
\begin{align*}
V^-(y)=&-\frac{\pi}{\cosh \pi\mu}\int_0^\infty v(x)\{Y_{2i\mu}+Y_{-2i\mu}\}\left(4\pi\sqrt{xy}\right)dx\\
V^+(y)=&4\cosh \pi\mu\int_0^\infty v(x)K_{2i\mu}\left(4\pi\sqrt{xy}\right)dx.
\end{align*}
\end{lemma}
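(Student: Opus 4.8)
The plan is to deduce the summation formula from the automorphy of $f$, packaged through the functional equation of an additively twisted Dirichlet series. Since $v$ is smooth and compactly supported, I would first write $v(n)=\frac{1}{2\pi i}\int_{(\sigma)}\tilde v(s)\,n^{-s}\,ds$ with $\tilde v(s)=\int_0^\infty v(x)x^{s-1}\,dx$, a function of rapid decay on vertical lines. Inserting this and interchanging the sum and integral (legitimate for $\sigma>1$ by the Hecke bound $\lambda_f(n)\ll n^{\theta}$ together with the decay of $\tilde v$) reduces the left-hand side to $\frac{1}{2\pi i}\int_{(\sigma)}\tilde v(s)\,D_a(s)\,ds$, where $D_a(s)=\sum_{n\ge 1}\lambda_f(n)e_q(an)n^{-s}$. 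It then suffices to continue $D_a(s)$, establish its functional equation relating it to the dual series $\sum_{n\ge1}\lambda_f(\mp n)e_q(\pm\bar a n)n^{s-1}$, and shift the contour to the left; the Bessel kernels $V^{\pm}$ will emerge upon inverting the Mellin transform against the gamma factors produced by the functional equation.

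For the functional equation I would exploit the cusp $a/q$. As $(a,q)=1$ and $P\mid q$, choose $\gamma=\begin{pmatrix}a&b\\ q&d\end{pmatrix}\in\Gamma_0(P)$ with $ad-bq=1$; then $d\equiv\bar a\pmod q$ and $f(\gamma z)=\psi(d)f(z)$. Taking $z=-d/q+it$ gives $\gamma z=a/q+i/(q^2t)$, so that $f\!\left(a/q+i/(q^2t)\right)=\bar\psi(a)\,f\!\left(-\bar a/q+it\right)$, the nebentypus entering precisely as $\psi(d)=\bar\psi(a)$. I would then form the Mellin integral in $t$ of this common function. Because $f$ is cuspidal it decays rapidly as $t\to0$ and as $t\to\infty$, so the integral is entire; substituting the two Fourier expansions and using the Mellin transform $\int_0^\infty K_{i\mu}(y)y^{s-1}\,dy=2^{s-2}\Gamma\!\big(\tfrac{s+i\mu}{2}\big)\Gamma\!\big(\tfrac{s-i\mu}{2}\big)$ produces $D_a(s)$ from one expansion and the dual series from the other. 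This yields the continuation and the functional equation of $D_a(s)$, with the two terms $\lambda_f(\mp n)e_q(\pm\bar a n)$ arising from the positive and negative frequencies in the dual expansion at the cusp $-\bar a/q$.

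The main obstacle is the archimedean bookkeeping: after the functional equation is in hand, I must compute the inverse Mellin transform of $\tilde v(s)$ against the ratio of gamma factors and identify it with the stated kernels. Writing this as a Mellin–Barnes integral and shifting the contour converts the gamma quotient into Bessel functions, while the reflection formula $\Gamma(w)\Gamma(1-w)=\pi/\sin\pi w$ redistributes the two frequency signs into the oscillatory kernel $\{Y_{2i\mu}+Y_{-2i\mu}\}$ (yielding $V^-$ with constant $-\pi/\cosh\pi\mu$) and the decaying kernel $K_{2i\mu}$ (yielding $V^+$ with constant $4\cosh\pi\mu$); the factors $\cosh\pi\mu$ are exactly the values $\Gamma(\tfrac12+i\mu)\Gamma(\tfrac12-i\mu)=\pi/\cosh\pi\mu$ of these reflection identities at the spectral parameter. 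Getting the kernels and constants to match precisely, and justifying each contour shift and interchange (using the Rankin–Selberg bound $\sum_{n\le x}|\lambda_f(n)|^2\ll x^{1+\varepsilon}$), is the delicate part; once the identity is proved for a dense class of test functions it extends to all smooth compactly supported $v$ by approximation. This is the level-$P$, nebentypus refinement of Meurman's full-level formula, and the only genuinely new input beyond the classical case is carrying the character $\psi$ through the automorphy factor.
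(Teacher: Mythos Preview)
The paper does not actually prove this lemma: it states the Voronoi summation formula as a known input, attributing the full-level case to Meurman \cite{Me} and simply recording the version needed (level divisible by $P$, nebentypus $\psi$). There is therefore nothing in the paper to compare your argument against line by line.

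That said, your outline is exactly the standard route by which such formulas are established, and it is essentially Meurman's method adapted to level and nebentypus: Mellin-invert the test function, identify the additively twisted Dirichlet series $D_a(s)$, prove its analytic continuation and functional equation by pairing the Fourier expansions of $f$ at the two cusps $a/q$ and $-\bar a/q$ via a matrix $\gamma\in\Gamma_0(P)$ (this is where $P\mid q$ and the factor $\bar\psi(a)$ enter), then shift the contour and recognize the resulting Mellin--Barnes integrals as the Bessel kernels $\{Y_{2i\mu}+Y_{-2i\mu}\}$ and $K_{2i\mu}$. The only point I would flag is that you should be a bit more careful with the archimedean computation: the split into $V^{+}$ and $V^{-}$ corresponds to the parity (even/odd) pieces of the dual expansion, and the constants $-\pi/\cosh\pi\mu$ and $4\cosh\pi\mu$ come out of a specific combination of gamma identities together with the integral representations $\int_0^\infty K_{i\mu}(y)y^{s-1}dy$ and the relation between $J_{\pm 2i\mu}$, $Y_{\pm 2i\mu}$, and $K_{2i\mu}$; your sketch gestures at this correctly but does not carry it out. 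As a proof sketch for a cited lemma this is entirely adequate.
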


Note that if $v$ is supported in $[Y,2Y]$, satisfying $y^jv^{(j)}(y)\ll_j 1$, then the sums on the right hand side of \eqref{voronoi2} are essentially supported on $n\ll q^2(qY)^{\varepsilon}/Y$ (where the implied constant depends on the form $f$ and $\varepsilon$). The contribution from the terms with $n\gg q^2(qY)^{\varepsilon}/Y$ is negligibly small. For smaller values of $n$ we will use the trivial bound $V^{\pm}(n/q^2)\ll Y$.

\subsection{Circle method}
We will be using a variant of the circle method, with overlapping intervals, which has been investigated by Jutila (\cite{J-1}, \cite{J-2}). For any set $S \subset \mathbb R$, let $\mathbb I_S$ denote the associated characteristic function, i.e. $\mathbb I_S(x)=1$ for $x\in S$ and $0$ otherwise. For any collection of positive integers $\mathcal Q \subset [1,Q]$ (which we call the set of moduli), and a positive real number $\delta$ in the range $Q^{-2}\ll \delta \ll Q^{-1} $, we define the function
$$
\tilde I_{\mathcal Q,\delta} (x)=\frac{1}{2\delta L}\sum_{q\in\mathcal Q}\;\sideset{}{^\star}\sum_{a\bmod{q}}\mathbb{I}_{[\frac{a}{q}-\delta,\frac{a}{q}+\delta]}(x),
$$
where $L=\sum_{q\in\mathcal Q}\phi(q)$. This is an approximation for $\mathbb I_{[0,1]}$ in the following sense: 
\begin{lemma}
\label{jutila-lemma}
We have
\begin{align}
\label{jutila}
\int_0^1\left|1-\tilde I_{\mathcal Q,\delta}(x)\right|^2dx\ll \frac{Q^{2+\varepsilon}}{\delta L^2}.
\end{align}
\end{lemma}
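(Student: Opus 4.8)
The plan is to estimate the $L^2$-distance between $\tilde I_{\mathcal Q,\delta}$ and the constant function $1$ on $[0,1]$ by expanding the square and analyzing the resulting double sum over the Farey-type fractions $a/q$. First I would write
\begin{align*}
\int_0^1\left|1-\tilde I_{\mathcal Q,\delta}(x)\right|^2\,dx = \int_0^1 1\,dx - 2\int_0^1 \tilde I_{\mathcal Q,\delta}(x)\,dx + \int_0^1 \tilde I_{\mathcal Q,\delta}(x)^2\,dx.
\end{align*}
The first term is $1$. For the second (cross) term, since each interval $[\frac{a}{q}-\delta,\frac{a}{q}+\delta]$ has length $2\delta$, integrating the indicator gives $2\delta$ (the fractions $a/q$ lie in $[0,1]$, so up to a negligible boundary effect the whole interval is counted), and summing over the $\sum_{q\in\mathcal Q}\phi(q)=L$ pairs $(a,q)$ and dividing by $2\delta L$ yields exactly $1$. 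Hence the first two terms contribute $1-2=-1$, and the whole estimate hinges on showing the third term equals $1+O(Q^{2+\varepsilon}/(\delta L^2))$.

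The heart of the argument is therefore the diagonal/off-diagonal split of
\begin{align*}
\int_0^1 \tilde I_{\mathcal Q,\delta}(x)^2\,dx = \frac{1}{4\delta^2 L^2}\sum_{q_1,q_2\in\mathcal Q}\;\sideset{}{^\star}\sum_{a_1\bmod{q_1}}\;\sideset{}{^\star}\sum_{a_2\bmod{q_2}}\,\mathrm{meas}\!\left(\left[\tfrac{a_1}{q_1}-\delta,\tfrac{a_1}{q_1}+\delta\right]\cap\left[\tfrac{a_2}{q_2}-\delta,\tfrac{a_2}{q_2}+\delta\right]\right).
\end{align*}
I would isolate the \emph{diagonal} terms, where $\frac{a_1}{q_1}=\frac{a_2}{q_2}$, i.e. the two fractions coincide as rationals. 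Each such coincidence contributes a full overlap of measure $2\delta$, and counting these diagonal contributions gives $\frac{1}{4\delta^2 L^2}\cdot 2\delta\cdot(\text{number of diagonal pairs})$; the leading part of this reproduces the main term $1$ (consistent with $\int \tilde I\approx 1$). The key quantitative input is that two \emph{distinct} reduced fractions $\frac{a_1}{q_1}\neq\frac{a_2}{q_2}$ with $q_1,q_2\le Q$ satisfy the separation bound
\begin{align*}
\left|\frac{a_1}{q_1}-\frac{a_2}{q_2}\right|=\frac{|a_1q_2-a_2q_1|}{q_1q_2}\geq\frac{1}{q_1q_2}\geq\frac{1}{Q^2}.
\end{align*}
Since $\delta\ll Q^{-1}$, whenever $2\delta<Q^{-2}$ two distinct fractions give disjoint intervals; more generally the number of off-diagonal pairs whose intervals can overlap (those with $|\frac{a_1}{q_1}-\frac{a_2}{q_2}|\le 2\delta$) is controlled by counting solutions to $|a_1q_2-a_2q_1|\le 2\delta q_1q_2\le 2\delta Q^2$.

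The main obstacle, and the step requiring genuine care, is bounding this off-diagonal count. For fixed moduli $q_1,q_2$, the number of pairs $(a_1,a_2)$ with $|a_1q_2-a_2q_1|\le 2\delta q_1q_2$ and each overlap contributing at most $2\delta$ to the measure must be summed over $q_1,q_2\in\mathcal Q$; a standard divisor-type argument shows that the count of integer pairs with $a_1q_2-a_2q_1=h$ for a given small $h$ is controlled by $\gcd$-considerations, producing the $Q^{\varepsilon}$ factor. Carrying this out, the off-diagonal contribution to the third term is $O\!\left(\frac{1}{4\delta^2 L^2}\cdot\delta\cdot Q^{2+\varepsilon}\right)=O\!\left(\frac{Q^{2+\varepsilon}}{\delta L^2}\right)$, which combines with the diagonal main term to give $1+O(Q^{2+\varepsilon}/(\delta L^2))$. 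Adding the $-1$ from the first two terms yields the claimed bound. I expect the bookkeeping of the overlap measures near $[0,1]$ boundary and the precise extraction of the main term $1$ from the diagonal (rather than, say, $1+o(1)$) to be the most delicate accounting, but the arithmetic separation inequality $|a_1/q_1-a_2/q_2|\ge Q^{-2}$ is the crucial structural fact driving the whole estimate.
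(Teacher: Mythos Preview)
Your approach has a genuine gap. The claim that the diagonal terms (where $a_1/q_1=a_2/q_2$) reproduce the main term $1$ in $\int_0^1\tilde I_{\mathcal Q,\delta}^2$ is incorrect. The number of such diagonal pairs is essentially $L$ (each reduced fraction paired with itself, with at most a divisor-bounded multiplicity if $\mathcal Q$ contains multiples), so the diagonal contributes
\[
\frac{1}{4\delta^2L^2}\cdot 2\delta\cdot O(LQ^{\varepsilon})=O\!\left(\frac{Q^{\varepsilon}}{\delta L}\right),
\]
which is absorbed into the error, not the main term. The main term $1$ actually arises from the \emph{aggregate} of off-diagonal overlaps: heuristically there are of order $\delta L^2$ overlapping off-diagonal pairs (for each of the $L$ fractions, about $4\delta L$ others lie within $2\delta$), each contributing overlap of average size $\delta$, and these combine to give $1+o(1)$. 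Consequently your off-diagonal count of $Q^{2+\varepsilon}$ is also wrong---for $\delta\asymp Q^{-1}$ the true count is of order $Q^3$---and an \emph{upper bound} on the off-diagonal cannot yield the lemma; one would need an asymptotic with the main term extracted.

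The paper proceeds instead via Parseval, which handles this automatically. Viewing $\tilde I_{\mathcal Q,\delta}$ as a $1$-periodic function, its $h$-th Fourier coefficient for $h\neq 0$ is
\[
\widehat{\tilde I}_{\mathcal Q,\delta}(h)=\frac{\sin(2\pi h\delta)}{2\pi h\,\delta L}\sum_{q\in\mathcal Q}c_q(h),
\]
with $c_q(h)$ the Ramanujan sum, while the zeroth coefficient is exactly $1$. Hence $\int_0^1|1-\tilde I_{\mathcal Q,\delta}|^2=\sum_{h\neq 0}|\widehat{\tilde I}_{\mathcal Q,\delta}(h)|^2$, and the bound follows from $|c_q(h)|\le(q,h)$, $\sum_{q\le Q}(q,h)\le Q\tau(h)$, and the elementary estimate $\sum_{h\neq 0}\min(\delta^2,h^{-2})\tau(h)^2\ll \delta^{1-\varepsilon}$. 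The point is that Parseval subtracts the constant mode \emph{before} squaring, which is precisely what your direct diagonal/off-diagonal split fails to accomplish.
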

This is a simple consequence of the Parseval theorem from Fourier analysis.


\section{Application of circle method}
\label{setup}

We will apply the circle method directly to the smooth sum 
$$
S(N)=\sum_{n\in \mathbb Z}\lambda_f(n)\chi(n)n^{-it}h\left(\frac{n}{N}\right),
$$
which appears in \eqref{afe}. Let's recall that the function $h$ is smooth, supported in $[1,2]$, and satisfies the bound $h^{(j)}(x)\ll 1$, where the implied constant depends only on $j$. We shall approximate $S(N)$ by
$$
\tilde S(N)=\frac{1}{L}\sum_{q\in\mathcal Q}\;\sideset{}{^\star}\sum_{a\bmod{q}}\mathop{\sum\sum}_{n,m\in \mathbb Z}\lambda_f(n)\chi(m)m^{-it}e_q(a(n-m))F(n,m)
$$ 
where $e_q(x)=e^{2\pi ix/q}$, and
$
F(x,y)=h\left(\frac{x}{N}\right)h^\star\left(\frac{y}{N}\right)\frac{1}{2\delta}\int_{-\delta}^{\delta}e(\alpha(x-y))d\alpha.
$
Here $h^\star$ is another smooth function with compact support in $(0,\infty)$, and $h^\star(x)=1$ for $x$ in the support of $h$. Also we choose $\delta=N^{-1}$ so that 
$
\frac{\partial^{i+j}}{\partial^i x\partial^j y}F(x,y)\ll_{i,j} \frac{1}{N^{i+j}}.
$\\

\begin{lemma}
\label{circ}
Let $\mathcal Q\subset [1,Q]$, with $L=\sum_{q\in\mathcal Q}\phi(q)\gg Q^{2-\varepsilon}$ and $\delta=N^{-1}$. Then we have
$$
S(N)=\tilde S(N)+O_{f,\varepsilon}\left(N\frac{\sqrt{N}(QN)^{\varepsilon}}{Q}\right).
$$
\end{lemma}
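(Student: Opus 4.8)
The plan is to compare the genuine sum $S(N)$ with its circle-method approximation $\tilde S(N)$ by interpreting the latter as an integral against the Jutila kernel $\tilde I_{\mathcal Q,\delta}$. First I would open up $\tilde S(N)$ using the definition $e_q(a(n-m)) = e((a/q)(n-m))$ together with the integral $\frac{1}{2\delta}\int_{-\delta}^{\delta} e(\alpha(n-m))\,d\alpha$ hidden inside $F$. Summing over the residues $a$ and the moduli $q$ and dividing by $L$ reconstructs exactly the averaged characteristic function, so that
\begin{align*}
\tilde S(N)=\mathop{\sum\sum}_{n,m\in\mathbb Z}\lambda_f(n)\chi(m)m^{-it}h\!\left(\tfrac{n}{N}\right)h^\star\!\left(\tfrac{m}{N}\right)\tilde I_{\mathcal Q,\delta}(n-m).
\end{align*}
On the other hand $S(N)$ itself can be written with the \emph{exact} indicator $\mathbb I_{[0,1]}$ (or rather $\delta_{n=m}$, which for integers $n,m$ in the support is detected by $\mathbb I_{[0,1]}(n-m)$ since $h^\star=1$ on the support of $h$). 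Thus the difference is controlled by replacing $\tilde I_{\mathcal Q,\delta}$ by $1$, and the whole problem reduces to bounding
\begin{align*}
S(N)-\tilde S(N)=\mathop{\sum\sum}_{n,m\in\mathbb Z}\lambda_f(n)\chi(m)m^{-it}h\!\left(\tfrac{n}{N}\right)h^\star\!\left(\tfrac{m}{N}\right)\bigl(1-\tilde I_{\mathcal Q,\delta}(n-m)\bigr).
\end{align*}

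Next I would estimate this error by Cauchy--Schwarz, separating the arithmetic coefficients from the smooth error kernel. Applying Cauchy in the variable $n$ (pulling out $\sum_{n\sim N}|\lambda_f(n)|^2\ll N^{1+\varepsilon}$ via Rankin--Selberg) leaves an $L^2$-norm of the kernel $1-\tilde I_{\mathcal Q,\delta}$ against the smooth weights. The key input is Lemma \ref{jutila-lemma}, which furnishes
\begin{align*}
\int_0^1\bigl|1-\tilde I_{\mathcal Q,\delta}(x)\bigr|^2\,dx\ll \frac{Q^{2+\varepsilon}}{\delta L^2}.
\end{align*}
Since the difference $n-m$ ranges over an interval of length $O(N)$, rescaling this $L^2$ bound over the relevant range and feeding in the hypotheses $L\gg Q^{2-\varepsilon}$ and $\delta=N^{-1}$ should produce a saving of roughly $N^{1+\varepsilon}\cdot N/Q^{2}\cdot N^{-1}$ after accounting for the various lengths; taking square roots and multiplying by the trivial count from the $\chi(m)m^{-it}$ sum yields the claimed shape $N\sqrt{N}(QN)^\varepsilon/Q$.

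The main obstacle will be bookkeeping the normalizations carefully so that the error term comes out with exactly the stated power $N\cdot\sqrt{N}/Q$ rather than being off by a factor of $\sqrt{N}$ or $Q$. In particular one must track how the integral over the continuous variable $x=(n-m)/\text{(scale)}$ interacts with the periodicity of $\tilde I_{\mathcal Q,\delta}$ (which is built from fractions $a/q$ in $[0,1]$), and how the length-$N$ range of $n-m$ tiles against the period $1$. I would expect the cleanest route is to write $1-\tilde I_{\mathcal Q,\delta}$ as a function on $\mathbb R/\mathbb Z$, exploit its periodicity to reduce the double sum to a single Parseval-type evaluation, and only then insert the Rankin--Selberg bound; this avoids losing powers of $N$ through a naive pointwise estimate of the kernel. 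The remaining steps—verifying that $h^\star\equiv 1$ on $\mathrm{supp}(h)$ lets us legitimately replace the Kronecker delta by $\mathbb I_{[0,1]}(n-m)$, and confirming the derivative bounds on $F$ used implicitly—are routine.
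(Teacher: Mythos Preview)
There is a genuine gap. First a notational point: unwinding the definition of $\tilde S(N)$ shows that the pair $(n,m)$ carries the weight $\int_0^1\tilde I_{\mathcal Q,\delta}(x)\,e(x(n-m))\,dx$, i.e.\ a \emph{Fourier coefficient} of $\tilde I_{\mathcal Q,\delta}$, not the value $\tilde I_{\mathcal Q,\delta}(n-m)$ (which is meaningless for integer $n-m$, since $\tilde I$ is a function on $\mathbb R/\mathbb Z$). Accordingly the difference is
$$
S(N)-\tilde S(N)=\int_0^1\bigl(1-\tilde I_{\mathcal Q,\delta}(x)\bigr)G_1(x)G_2(x)\,dx,
$$
with $G_1(x)=\sum_n\lambda_f(n)h(n/N)e(nx)$ and $G_2(x)=\sum_m\chi(m)m^{-it}h^\star(m/N)e(-mx)$. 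This may be only a slip of notation on your part, but it feeds into the muddled bookkeeping that follows.

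The substantive problem is that Cauchy in $n$ together with Rankin--Selberg loses a factor $\sqrt N$. Your scheme amounts to bounding the integral above by $\|G_1\|_{L^2}\cdot\|(1-\tilde I)G_2\|_{L^2}$. Parseval gives $\|G_1\|_{L^2}^2\ll N^{1+\varepsilon}$, but for the second factor one is forced into
$$
\int_0^1|1-\tilde I|^2\,|G_2|^2\;\le\;\|G_2\|_\infty^2\int_0^1|1-\tilde I|^2\;\ll\;N^2\cdot\frac{Q^{2+\varepsilon}}{\delta L^2},
$$
since no nontrivial pointwise bound on $G_2$ is available in general. The outcome is $|S(N)-\tilde S(N)|\ll N^{2+\varepsilon}/Q$, not $N^{3/2+\varepsilon}/Q$; the extra factor $N^{-1}$ in your heuristic (``$N^{1+\varepsilon}\cdot N/Q^2\cdot N^{-1}$'') has no source. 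The paper's proof instead invokes the \emph{pointwise} Wilton-type estimate $G_1(x)\ll_{f,\varepsilon}N^{1/2+\varepsilon}$, valid uniformly in $x$, pulls this out of the integral, and only then applies Cauchy to pair $\|1-\tilde I\|_{L^2}$ (Lemma~\ref{jutila-lemma}) against $\|G_2\|_{L^2}\asymp N^{1/2}$ (orthogonality of the characters $e(-mx)$). That uniform square-root cancellation in the additively twisted sum of Hecke eigenvalues is the missing ingredient, and it cannot be recovered from the Rankin--Selberg mean-square bound alone.
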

\begin{proof}
Set 
$$
G(x)=\mathop{\sum\sum}_{n,m\in \mathbb Z}\lambda_f(n)\chi(m)m^{-it}h\left(\frac{n}{N}\right)h^\star\left(\frac{m}{N}\right)e(x(n-m)).
$$
Observe that $S(N)=\int_0^1G(x)dx$ and $\tilde S(N)=\int_0^1\tilde I_{\mathcal Q,\delta} (x)G(x)dx$. Hence 
$$
\left|S(N)-\tilde S(N)\right|\leq \int_0^1\left|1-\tilde I_{\mathcal Q,\delta} (x)\right|\left|\mathop{\sum}_{n\in \mathbb Z}\lambda_f(n)e(xn)h\left(\frac{n}{N}\right)\right|\left|\sum_{m\in\mathbb Z}\chi(m)m^{-it}e(-xm)h^\star\left(\frac{m}{N}\right)\right|dx.
$$
For the middle sum we have the point-wise bound $\mathop{\sum}_{n\in \mathbb Z}\lambda_f(n)e(xn)h\left(\frac{n}{N}\right)\ll_{f,\varepsilon} N^{\frac{1}{2}+\varepsilon}$. Using Cauchy we now arrive at
$$
\left|S(N)-\tilde S(N)\right|\ll_{f,\varepsilon} N^{\frac{1}{2}+\varepsilon}\left[\int_0^1\left|1-\tilde I_{\mathcal Q,\delta} (x)\right|^2dx\right]^{\frac{1}{2}}\left[\int_0^1\left|\sum_{m\in\mathbb Z}\chi(m)m^{-it}e(-xm)h^\star\left(\frac{m}{N}\right)\right|^2dx\right]^{\frac{1}{2}}.
$$
For the last sum we open the absolute value square and execute the integral. We are left with only the diagonal, which has size $N$. For the other sum we use Lemma \ref{jutila-lemma}. It follows that
$$
\left|S(N)-\tilde S(N)\right|\ll_{f,\varepsilon} \frac{(QN)^{1+\varepsilon}}{\sqrt{\delta}L}\ll_{f,\varepsilon} N\frac{\sqrt{N}(QN)^{\varepsilon}}{Q}.
$$
\end{proof}

We will choose the set of moduli in Section \ref{conclusion}. We pick the size of the moduli to be $Q=N(MT)^{\eta-\frac{1}{2}}$, so that the contribution of the error term in Lemma \ref{circ} to \eqref{afe} is bounded by $(MT)^{\frac{1}{2}-\eta+\varepsilon}$. For $N\leq (MT)^{1-2\eta}$, the trivial bound for $S(N)$ is good enough for our purpose. Now we proceed towards the estimation of $\tilde S(N)$.


\section{Estimation of $\tilde S(N)$}
\label{gl1_twists}

\subsection{Applying Poisson and Voronoi summation}
We will now assume that each member of $\mathcal Q$ is a multiple of $P$, the level of the Maass form $f$, and is coprime to $M$, the modulus of the character $\chi$. Set
\begin{align}
\label{approx-sn}
\tilde S_x(N)=\frac{1}{L}\sum_{q\in\mathcal Q}\;\sideset{}{^\star}\sum_{a\bmod{q}}\mathop{\sum\sum}_{n,m\in \mathbb Z}\lambda_f(n)\chi(m)m^{-it}e_q(a(n-m))h\left(\frac{n}{N}\right)h^\star\left(\frac{m}{N}\right)e(x(n-m))
\end{align} 
so that $\tilde S(N)=(2\delta)^{-1}\int_{-\delta}^{\delta}\tilde S_x(N)dx$. 

\begin{lemma}
\label{sum-form}
We have
\begin{align}
\label{sum-form-eq}
\tilde S_x(N)=\tfrac{N^{1-it}\psi(M)\varepsilon_{\chi}}{\sqrt{M}L}\sum_{q\in\mathcal Q}\tfrac{\chi(q)}{q}\;\sum_{\pm}\sum_{n=1}^\infty\mathop{\sum}_{\substack{m\in \mathbb Z\\(m,q)=1}}\lambda_f(\mp n)\bar\chi(m)\bar\psi(m)e_q(\pm M\bar mn)H_x^\star(m;q)V_x^{\pm}\left(\tfrac{n}{q^2}\right),
\end{align}
where $\varepsilon_{\chi}$ is the sign of the Gauss sum associated to $\chi$, $H_x^\star$ is defined in \eqref{h-trans}, and 
$V_x^{\pm}$ are as in \eqref{voronoi2} corresponding to $v_x(n)=h\left(\frac{n}{N}\right)e(xn)$.
\end{lemma}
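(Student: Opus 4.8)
The plan is to treat the sums over $n$ and $m$ in \eqref{approx-sn} independently, since the additive character factors as $e_q(a(n-m))=e_q(an)e_q(-am)$. On the $n$-side I would apply the Voronoi summation formula \eqref{voronoi2} to $\sum_n\lambda_f(n)e_q(an)v_x(n)$ with $v_x(n)=h(n/N)e(xn)$; the hypotheses $P\mid q$ and $(a,q)=1$ are exactly what the restriction to multiples of $P$ and the star-sum guarantee. This produces the dual sum $\frac{\bar\psi(a)}{q}\sum_{\pm}\sum_{n\ge1}\lambda_f(\mp n)e_q(\pm\bar a n)V_x^{\pm}(n/q^2)$, carrying the nebentypus factor $\bar\psi(a)$ and the inverted frequency $e_q(\pm\bar a n)$.

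On the $m$-side I would apply Poisson summation to $\sum_m\chi(m)m^{-it}e_q(-am)w_x(m)$ with $w_x(m)=h^\star(m/N)e(-xm)$. Since $(q,M)=1$, the character $\chi(m)e_q(-am)$ has period $qM$, so grouping $m$ into residue classes modulo $qM$ and applying Poisson to each progression yields a dual variable $m$ weighted by the complete exponential sum $\sum_{\beta\bmod qM}\chi(\beta)e(\beta(m-aM)/(qM))$ times the Fourier integral of $y^{-it}w_x(y)$ at $m/(qM)$. Splitting this complete sum by the Chinese Remainder Theorem into a sum modulo $q$ and a sum modulo $M$, the modulo-$q$ factor forces the congruence $m\equiv aM\pmod q$, while the modulo-$M$ factor is a Gauss sum which, by primitivity of $\chi$, evaluates to $\bar\chi(m)\chi(q)\tau(\chi)$; altogether the complete sum equals $q\,\bar\chi(m)\chi(q)\tau(\chi)$ under that congruence.

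The key reduction then comes from combining the two sides. The congruence $m\equiv aM\pmod q$ determines $a\equiv m\bar M\pmod q$ uniquely, so the star-sum over $a$ collapses, and the coprimality $(a,q)=1$ forces the condition $(m,q)=1$ recorded in \eqref{sum-form-eq}. Substituting $a\equiv m\bar M$ converts the Voronoi factors into the required shape: $\bar\psi(a)=\bar\psi(m)\psi(M)$, which supplies the $\psi(M)\bar\psi(m)$, and $e_q(\pm\bar a n)=e_q(\pm M\bar m n)$, which is precisely the Kloosterman-type phase. Writing $\tau(\chi)=\varepsilon_{\chi}\sqrt M$ turns the prefactor $\tau(\chi)/M$ into $\varepsilon_{\chi}/\sqrt M$, and factoring $N^{1-it}$ out of the Fourier integral after the substitution $y=Nu$ identifies the remaining $m$-weight with $N^{1-it}H_x^\star(m;q)$, where $H_x^\star$ is the oscillatory integral recorded in \eqref{h-trans}.

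The main obstacle I expect is bookkeeping rather than analysis: correctly tracking all the arithmetic factors—the nebentypus $\bar\psi$, the stray $\psi(M)$ and $\chi(q)$, the Gauss-sum normalization $\varepsilon_{\chi}$, and the inverses $\bar a,\bar m,\bar M$—through the CRT factorization and the collapse of the $a$-sum, while keeping the signs $\pm$ consistent between the Voronoi output and the Poisson output. Since both dual sums remain complete and no truncation is performed at this stage, \eqref{sum-form-eq} is an exact identity, so no error term or convergence estimate enters here; the only genuinely analytic input is the elementary evaluation of the Fourier transform of $y^{-it}h^\star(y/N)e(-xy)$, which needs no stationary phase and merely defines the weight $H_x^\star$.
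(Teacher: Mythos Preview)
Your proposal is correct and follows the same approach as the paper: Poisson summation modulo $Mq$ on the $m$-sum together with the Voronoi formula \eqref{voronoi2} on the $n$-sum, after which the congruence $m\equiv aM\pmod q$ eliminates the $a$-sum and produces exactly the arithmetic factors $\psi(M)\bar\psi(m)$, $\chi(q)\bar\chi(m)$, $\varepsilon_\chi/\sqrt M$, and $e_q(\pm M\bar m n)$. In fact your write-up is more explicit than the paper's terse proof, which records the outputs of Poisson and Voronoi separately and leaves the collapse of the $a$-sum and the bookkeeping of the nebentypus and Gauss-sum factors implicit.
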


\begin{proof}
First we apply the Poisson summation formula to the sum over $m$ in \eqref{approx-sn}, after breaking it up modulo $Mq$. This gives
$$
\mathop{\sum}_{m\in \mathbb Z}\chi(m)m^{-it}e_q(-am)h^\star\left(\frac{m}{N}\right)e(-xm)=\frac{N^{1-it}\varepsilon_M}{\sqrt{M}}\mathop{\sum}_{\substack{m\in \mathbb Z\\m\equiv Ma\bmod{q}}}\bar\chi(m)\chi(q)H_x^\star(m;q),
$$
where
\begin{align}
\label{h-trans}
H_x^\star(m;q)=\int_{\mathbb R}h^\star(y)y^{-it}e(-xyN)e_{Mq}(-mNy)dy.
\end{align}
To the sum over $n$ we apply Voronoi summation formula \eqref{voronoi2} to get
$$
\sum_{n=1}^\infty \lambda_f(n)e_q(an)h\left(\frac{n}{N}\right)e(xn)=\frac{\bar\psi(a)}{q}\sum_{\pm}\sum_{n=1}^\infty \lambda_f(\mp n)e_q\left(\pm\bar{a}n\right)V_x^{\pm}\left(\frac{n}{q^2}\right),
$$
where $V_x^{\pm}$ are as in \eqref{voronoi2} corresponding to $v_x(n)=h\left(\frac{n}{N}\right)e(xn)$. 
\end{proof}

\subsection{Estimates for the integrals}
By repeated integration by parts it follows that $H_x^\star(m;q)$, as given in \eqref{h-trans}, is negligibly small if $|m|\gg MTQ^{1+\varepsilon}N^{-1}$, where $T=3+|t|$. For smaller values of $m$, we change variables to get
\begin{align}
\label{h-trans-est}
H_x^\star(m;q)=&\frac{|m|^{it}}{m}\left(\frac{Mq}{N}\right)^{1-it}\int_{\mathbb R}h^\star\left(\frac{Mqy}{mN}\right)e\left(-\frac{Mqxy}{m}\right)|y|^{-it} e(-y)dy\\
\nonumber =&\frac{|m|^{it}}{m}\left(\frac{Mq}{N}\right)^{1-it}H_x^\sharp(m;q).
\end{align}
Differentiating under the integral sign and using the second derivative bound for the exponential integral we get 
\begin{align}
\label{h-trans-est-2}
u^j\frac{\partial^j}{\partial u^j}H_x^\sharp(u;q)\ll_j \frac{|u|N}{Mq\sqrt{T}}
\end{align} 
(as $|x|\ll N^{-1}$). For the $n$-sum in Lemma \ref{sum-form}, we use the properties of the integral $V_x^{\pm}$ which we have noted in Section \ref{vsf}. The effective support of the $n$-sum is given by $1\leq |n|\ll Q^{2+\varepsilon}/N\ll (MT)^{2\eta}Q^{\varepsilon}$. (So the $n$-sum is short and the $m$-sum is relatively long.) For small $n$ we will use the trivial bound  
$$
V_x^{\pm}\left(\frac{n}{q^2}\right)\ll N.
$$\\

We will now use these bounds to show that the contribution from small $m$ in \eqref{sum-form-eq} is good enough for our purpose. Let
\begin{align*}
\tilde S_x(N;X)=\frac{N^{1-it}\psi(M)\varepsilon_{\chi}}{\sqrt{M}L}\sum_{q\in\mathcal Q}\frac{\chi(q)}{q}\;\sum_{\pm}\sum_{n=1}^\infty\mathop{\sum}_{\substack{|m|\sim X\\(m,q)=1}}\lambda_f(\mp n)\bar\chi(m)\bar\psi(m)e_q(\pm M\bar mn)H_x^\star(m;q)V_x^{\pm}\left(\frac{n}{q^2}\right),
\end{align*}
where $|m|\sim X$ means that $X\leq |m|<2X$. 
\begin{lemma}
We have
\begin{align*}
\tilde S_x(N;X)\ll \frac{NX}{\sqrt{MT}}Q^{\varepsilon}.
\end{align*}
\end{lemma}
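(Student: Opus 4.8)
The plan is to bound $\tilde S_x(N;X)$ by trivial estimation, using the decay of the integral transforms established in \eqref{h-trans-est}, \eqref{h-trans-est-2} and Section \ref{vsf}. The key observation is that both the $n$-sum and the $m$-sum are genuinely truncated: the $n$-sum has effective length $1\leq |n|\ll Q^{2+\varepsilon}/N$, while $H_x^\star(m;q)$ is negligible outside $|m|\ll MTQ^{1+\varepsilon}N^{-1}$. First I would insert the size bounds for the two transforms. From \eqref{h-trans-est} together with \eqref{h-trans-est-2} one reads off
\begin{align*}
H_x^\star(m;q)\ll \frac{1}{|m|}\cdot\frac{Mq}{N}\cdot\frac{|m|N}{Mq\sqrt T}=\frac{1}{\sqrt T},
\end{align*}
so each $H_x^\star$ contributes a gain of $T^{-1/2}$, and from Section \ref{vsf} we have $V_x^{\pm}(n/q^2)\ll N$ on the relevant range.

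Next I would bound everything in absolute value, using $|\lambda_f(\mp n)|$, $|\bar\chi(m)\bar\psi(m)|\leq 1$, $|\chi(q)/q|\leq 1/q$ and $|\psi(M)\varepsilon_{\chi}|=1$. This turns $\tilde S_x(N;X)$ into
\begin{align*}
\tilde S_x(N;X)\ll \frac{N}{\sqrt M\,L}\cdot\frac{N}{\sqrt T}\sum_{q\in\mathcal Q}\frac{1}{q}\sum_{1\leq|n|\ll Q^{2+\varepsilon}/N}|\lambda_f(n)|\sum_{|m|\sim X}1.
\end{align*}
The $m$-sum produces a factor $X$. For the $n$-sum I would use Cauchy–Schwarz together with the Rankin–Selberg bound $\sum_{n\leq x}|\lambda_f(n)|^2\ll x^{1+\varepsilon}$, which gives $\sum_{|n|\ll Q^{2+\varepsilon}/N}|\lambda_f(n)|\ll (Q^2/N)^{1+\varepsilon}$ up to $Q^\varepsilon$ factors. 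The $q$-sum contributes $\sum_{q\in\mathcal Q}1/q\ll Q^\varepsilon$, and using $L\gg Q^{2-\varepsilon}$ from Lemma \ref{circ} the total becomes, schematically,
\begin{align*}
\tilde S_x(N;X)\ll \frac{N}{\sqrt M}\cdot\frac{N}{\sqrt T}\cdot\frac{X}{Q^2}\cdot\frac{Q^2}{N}\,Q^\varepsilon=\frac{N X}{\sqrt{MT}}\,Q^\varepsilon,
\end{align*}
which is exactly the claimed bound.

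The only subtlety I would watch is bookkeeping of the powers of $N$ and $Q$: the prefactor carries $N/(\sqrt M L)$, the $V_x^{\pm}$ bound supplies another $N$, and the truncated $n$-sum supplies $Q^2/N$, so the two factors of $N$ and the $L\asymp Q^2$ must cancel cleanly against the $Q^2/N$ from the $n$-range. I do not expect a genuine obstacle here, since all the hard analytic input (the $T^{-1/2}$ saving in $H_x^\star$ and the short effective $n$-range) has already been extracted in the preceding estimates; the lemma is purely a trivial estimation organized to display the gain over the convexity-range size. The saving over convexity is precisely the factor $T^{-1/2}$ coming from stationary phase in $H_x^\star$, which is what makes the small-$m$ contribution harmless and lets one focus on the main term in the subsequent sections.
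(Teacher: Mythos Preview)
Your proof is correct and follows exactly the approach the paper intends: the lemma is stated without a written proof, but the sentence preceding it (``We will now use these bounds to show that the contribution from small $m$ is good enough'') signals precisely the trivial estimation you carry out, inserting $H_x^\star(m;q)\ll T^{-1/2}$, $V_x^\pm(n/q^2)\ll N$, the effective $n$-range $n\ll Q^{2+\varepsilon}/N$, and $L\gg Q^{2-\varepsilon}$. Your bookkeeping of the powers of $N$, $Q$, $M$, $T$ is accurate.
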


This lemma yields the bound $\tilde S(N)\ll N(MT)^{\eta}$. Hence for larger value of $X$ any further saving will yield subconvexity. One way will be to appeal to the large sieve inequality of Duke, Friedlander and Iwaniec for Kloosterman fractions. But a more interesting and fruitful way will be to use the flexibility of the set $\mathcal Q$ to build a bilinear structure in the circle method itself. 


\section{Estimation of $\tilde S(N)$ : conclusion}
\label{conclusion}

\subsection{Applying Cauchy and Poisson}
We choose the set of moduli $\mathcal Q$ to be the product set $P\mathcal Q_1 \mathcal Q_2$, where $\mathcal Q_i$ consists of primes in the dyadic segment $[Q_i,2Q_i]$ (and not dividing $PM$) for $i=1,2$, and $Q_1Q_2=Q=N(MT)^{\eta-\frac{1}{2}}$. Also we pick $Q_1$ and $Q_2$ (whose optimal sizes will be determined later) so that the collections $\mathcal Q_1$ and $\mathcal Q_2$ are disjoint. Using Cauchy we have
$$
\tilde S_{x}(N;X)\ll Q^{\varepsilon}\frac{\sqrt{MQ_2}}{Q^2}\left\{\sum_{q_2\in\mathcal Q_2}\;\sum_{\pm}\tilde S_{x}(N;X,q_2,\pm)\right\}^{\frac{1}{2}}+N^{-A}.
$$ 
where $A>0$, and $\tilde S_{x}(N;X,q_2,\pm)$ is given by
$$
\mathop{\sum}_{\substack{m\in \mathbb Z\\(m,Pq_2)=1}}\frac{W\left(\frac{m}{X}\right)}{m}\left|\sum_{\substack{q_1\in\mathcal Q_1\\(m,q_1)=1}}\chi(q_1)\sum_{1\leq n\ll \frac{Q^2}{N}Q^{\varepsilon}}\lambda_f(\mp n)e_{Pq_1q_2}(\pm M\bar mn)H_x^\sharp(m;Pq_1q_2)V_x^{\pm}\left(\frac{n}{Pq_1^2q_2^2}\right)\right|^2
$$
with $(MT)^{\frac{1}{2}-\eta}Q^{-\varepsilon}\ll X\ll (MT)^{\frac{1}{2}+\eta}Q^{\varepsilon}$ and $W$ is non-negative smooth function supported in $[-2,2]$ such that $W(x)=1$ for $x\in [-1,1]$. Opening the absolute square and interchanging the order of summations we get  
\begin{align}
\label{final}
\tilde S_{x}(N;X,q_2,\pm)=\mathop{\sum\sum}_{q_1, q_1'\in\mathcal Q_1}\chi(q_1\overline{q_1'})\mathop{\sum\sum}_{1\leq n, n'\ll \frac{Q^2}{N}Q^{\varepsilon}}\lambda_f(\mp n)\bar \lambda_f(\mp n')V_x^{\pm}\left(\frac{n}{Pq_1^2q_2^2}\right)V_x^{\pm}\left(\frac{n'}{Pq_1'^2q_2^2}\right)\mathcal T
\end{align} 
where
$$
\mathcal T=\sum_{\substack{m\in\mathbb Z\\(m,Pq_1q_1'q_2)=1}}e_{Pq_1q_2}(\pm M\bar mn)e_{Pq_1'q_2}(\mp M\bar mn')H_x^\sharp(m;Pq_1q_2)\bar H_x^\sharp(m;Pq_1'q_2)\frac{1}{m}W\left(\frac{m}{X}\right).
$$
Now again we apply Poisson to the sum over $m$. With this we arrive at
\begin{lemma}
We have
\begin{align*}
\mathcal T=\frac{1}{Pq_1q_1'q_2}\sum_{m\in\mathbb Z}S\left(\mp M(q_1n'-q_1'n),m;Pq_1q_1'q_2\right)\mathcal I
\end{align*}
where $S\left(\mp M(q_1n'-q_1'n),m;Pq_1q_1'q_2\right)$ is the Kloosterman sum, and 
\begin{align*}
\mathcal I=\int_{\mathbb R}y^{-1}W\left(y\right) H_x^\sharp(yX;Pq_1q_2)\bar H_x^\sharp(yX;Pq_1'q_2)e\left(-\frac{Xm}{Pq_1q_1'q_2}y\right)dy.
\end{align*}
\end{lemma}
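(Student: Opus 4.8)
The plan is to evaluate $\mathcal T$ by Poisson summation in $m$ to modulus $c:=Pq_1q_1'q_2$. First observe that the summand splits into an arithmetic part and a smooth analytic weight: the characters $e_{Pq_1q_2}(\pm M\bar m n)$ and $e_{Pq_1'q_2}(\mp M\bar m n')$ depend on $m$ only modulo $Pq_1q_2$ and $Pq_1'q_2$ respectively, so their product is periodic modulo $c$, whereas the remaining factor $\phi(m):=m^{-1}W(m/X)H_x^\sharp(m;Pq_1q_2)\bar H_x^\sharp(m;Pq_1'q_2)$ is a smooth function of the real variable $m$ supported away from the origin. Accordingly I would split the sum over $m$ coprime to $c$ into residue classes $m\equiv\beta\pmod c$ with $(\beta,c)=1$ and apply Poisson to each progression, namely
$$
\sum_{m\equiv\beta\,(c)}\phi(m)=\frac{1}{c}\sum_{k\in\mathbb Z}e\!\left(\frac{k\beta}{c}\right)\hat\phi\!\left(\frac{k}{c}\right),\qquad \hat\phi(\xi)=\int_{\mathbb R}\phi(u)e(-u\xi)\,du.
$$

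Next I would carry out the arithmetic simplification. Writing $\beta^*$ for the inverse of $\beta$ modulo $c$, reduction shows $\bar\beta\equiv\beta^*$ modulo $Pq_1q_2$ and likewise modulo $Pq_1'q_2$; using $\tfrac{1}{Pq_1q_2}=\tfrac{q_1'}{c}$ and $\tfrac{1}{Pq_1'q_2}=\tfrac{q_1}{c}$, the product of the two characters collapses to $e\!\left(\mp M(q_1n'-q_1'n)\beta^*/c\right)$. Summing this against $e(k\beta/c)$ over $\beta\bmod c$ with $(\beta,c)=1$ therefore produces exactly the Kloosterman sum $S(\mp M(q_1n'-q_1'n),k;c)$. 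Finally, the change of variable $u=yX$ identifies $\hat\phi(k/c)$ with $\mathcal I$, with $k$ playing the role of the dual variable; renaming $k$ as $m$ and retaining the prefactor $c^{-1}=(Pq_1q_1'q_2)^{-1}$ gives the stated identity.

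The delicate point, and the step I would be most careful about, is the arithmetic bookkeeping when merging the two phases: one must confirm that the inverses to the three moduli $Pq_1q_2$, $Pq_1'q_2$ and $c$ are compatible (which holds because each of the first two divides $c$) and that the complementary cofactors $q_1'$ and $q_1$ enter with the correct signs so that the phases coalesce into $\mp M(q_1n'-q_1'n)\beta^*/c$. I would also treat separately the diagonal $q_1=q_1'$, where the genuine period of the arithmetic factor is only $Pq_1q_2$; there Poisson to the coarser modulus $c=Pq_1^2q_2$ remains valid and the same computation reproduces $S(\mp Mq_1(n'-n),k;Pq_1^2q_2)$, consistent with the stated formula. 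The analytic input — absolute convergence of the dual sum and legitimacy of interchanging summation and integration — is then routine given the smoothness and derivative bounds for $H_x^\sharp$ recorded in \eqref{h-trans-est-2}.
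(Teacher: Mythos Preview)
Your argument is correct and is precisely the approach indicated in the paper, which merely states that one applies Poisson summation on the sum over $m$; you have supplied the standard details (splitting into residue classes modulo $c=Pq_1q_1'q_2$, identifying the character sum as a Kloosterman sum via the cofactor relations $1/(Pq_1q_2)=q_1'/c$ and $1/(Pq_1'q_2)=q_1/c$, and the change of variable $u=yX$ in the Fourier integral). The extra care you take with the diagonal case $q_1=q_1'$ and the compatibility of inverses is appropriate and consistent with the stated formula.
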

Using \eqref{h-trans-est-2} and repeated integration by parts we find that $\mathcal I$ is negligibly small unless $|m|\ll Q_1Q^{1+\varepsilon}X^{-1}$. For smaller values of $m$ we use the bound
$$
\mathcal I\ll \frac{X^2N^2}{M^2Q^2T}\asymp \frac{X^2}{M(MT)^{2\eta}},
$$
which again follows from \eqref{h-trans-est-2}.

\subsection{Final estimates}
Using the Weil bound for the Kloosterman sums we get
$$
\mathcal T\ll \frac{X^2}{M(MT)^{2\eta}Q_1\sqrt{Q_2}}\sum_{|m|\ll \frac{Q_1Q}{X}Q^{\varepsilon}}\left(M(q_1n'-q_1'n),m,q_1q_1'q_2\right)^{\frac{1}{2}}.
$$
Suppose $Q_1\gg (MT)^{2\eta}Q^{\varepsilon}$. Since $\mathcal Q_1$ consists of primes (not dividing $M$), it follows that $q_1|(q_1n'-q_1'n)$ if and only if $q_1=q_1'$, as $1\leq |n|\ll (MT)^{2\eta}$ and $q_1$ is a prime of larger size. Also taking $Q_1\ll (MT)^{\frac{1}{4}-\frac{\eta}{2}}Q^{-\varepsilon}$, we can guarantee that each $q_2\in\mathcal Q_2$ is a large enough prime, so that $q_2|m$ if and only if $m=0$. Also as $N>(MT)^{1-2\eta}$ and $Q_1Q_2=N(MT)^{\eta-\frac{1}{2}}$, it follows from the bound $Q_1\ll (MT)^{\frac{1}{4}-\frac{\eta}{2}}Q^{-\varepsilon}$ that $Q_2>Q_1(MT)^{2\eta}Q^{\varepsilon}$. Hence $q_2|(q_1n'-q_1'n)$ if and only if $q_1=q_1'$ and $n=n'$. Using these observations we first conclude that
\begin{align}
\label{sum-t}
\mathcal T\ll \frac{XQ_1\sqrt{Q_2}}{M(MT)^{2\eta}}+\frac{X^2}{M(MT)^{2\eta}Q_1\sqrt{Q_2}}\left((q_1n'-q_1'n),q_1q_1'q_2\right)^{\frac{1}{2}},
\end{align}
where the first term is the contribution of non-zero $m$, and the last term accounts for $m=0$. Also it follows that the gcd $\left((q_1n'-q_1'n),q_1q_1'q_2\right)=1$ if $q_1\neq q_1'$, $\left((q_1n'-q_1'n),q_1q_1'q_2\right)=q_1$ if $q_1=q_1'$ but $n\neq n'$, and $\left((q_1n'-q_1'n),q_1q_1'q_2\right)=q_1^2q_2$ if $q_1= q_1'$ and $n=n'$.\\

Next we substitute the bound for $\mathcal T$ in \eqref{final}. We use Cauchy and the bound $\sum_{1\leq n\leq x}|\lambda_f(n)|^2\ll x^{1+\varepsilon}$ for the Fourier coefficients to conclude -
$$
\tilde S_{x}(N;X)\ll \left(\frac{N^{\frac{3}{2}}(MT)^{\eta}}{\sqrt{MT}Q_2^{\frac{1}{4}}}+\frac{\sqrt{MNT}}{\sqrt{Q_1}}\right)Q^{\varepsilon}+N^{-A}.
$$ 
The first term on the right hand side accounts for the contribution of the first term in \eqref{sum-t}, and the second term comes from the second term in \eqref{sum-t}. For any given $\eta$, the optimum choice of $Q_1$ is obtained by equating the two terms and using the relation $Q_1Q_2=N(MT)^{\eta-\frac{1}{2}}$. It follows that 
$$
Q_1=\frac{(MT)^{\frac{7}{6}}}{N(MT)^{\eta}}.
$$
This satisfies our requirement that $(MT)^{\frac{1}{4}-\frac{\eta}{2}}Q^{-\varepsilon}\gg Q_1\gg (MT)^{2\eta}Q^{\varepsilon}$ if $\eta<\frac{1}{18}$. To get the optimal value of $\eta$ we compare this bound with the bound for the error term in Lemma \ref{circ}, i.e. 
$$
\frac{\sqrt{MT}N}{(MT)^{\frac{7}{12}-\frac{\eta}{2}}}=\sqrt{N}(MT)^{\frac{1}{2}-\eta}.
$$ 
It follows that $\eta=\frac{1}{18}-\varepsilon$ is the optimal choice. This completes the proof of the theorem.


\end{document}